\newtheorem{theorem}{Theorem}[section]
\newtheorem{lemma}[theorem]{Lemma}
\newtheorem{proposition}[theorem]{Proposition}
\newtheorem{conjecture}[theorem]{Conjecture}
\theoremstyle{definition}
\theoremstyle{remark}
\numberwithin{equation}{section}
\def\Fq{{\mathbb F}_q}
\def\FF{{\mathbb F}}
\def\Fqmn{{\mathbb F}_{q^{mn}}}
\def\Fqtn{{\mathbb F}_{q^{2n}}}
\def\Fqn{{\mathbb F}_{q^{n}}}
\newcommand{\GL}{\mathrm{GL}}
\newcommand{\M}{\mathrm{M}}
\newcommand{\GCD}{\mathrm{GCD}}
\newcommand{\bav}{{\EuScript B}^{\alpha}_{(v_1,v_2)}}
\newcommand{\bavm}{{\EuScript B}^{\alpha}_{(v_1,\dots, v_m)}}
\newcommand{\Sb}{{\EuScript S}_{\beta}}
\begin{document}

\title[Enumeration of Splitting Subspaces]{Enumeration of Splitting Subspaces over Finite Fields}


\author{Sudhir R. Ghorpade}
\address{Department of Mathematics, 
Indian Institute of Technology Bombay,\newline \indent
Powai, Mumbai 400076, India.}
\email{srg@math.iitb.ac.in}

\author{Samrith Ram}
\address{Department of Mathematics,
Indian Institute of Technology Bombay,\newline \indent
Powai, Mumbai 400076, India.}
\email{samrith@gmail.com}

\subjclass[2010]{Primary 15A03, 11T06 05E99 Secondary 11T71}

\date{}

\begin{abstract}
We 
discuss an elementary, yet unsolved, problem of Niederreiter concerning the enumeration of 
a class of subspaces of finite dimensional vector spaces over finite fields.  
A short and self-contained account of some
recent progress on this problem is included 
and 
some 
related problems are discussed. 
\end{abstract}

\maketitle


\section{Introduction}
\label{sec:in}
Finite fields have a remarkable property that finite dimensional vector spaces over them are naturally endowed with a canonical and compatible field structure. Indeed, we can simply ``move the $d$'' so as to write $\Fq^d \simeq \FF_{q^d}$, where $d$ is any positive integer and as usual, $\Fq$ denotes the finite field with $q$ elements. This leads to some interesting notions where the field structure and the linear structure are intertwined. One such notion is that of a splitting subspace, which appears to go back at least to Niederreiter (1995) in connection with 
his work on pseudorandom number generation. 
Here is the definition:

Let $m,n$ be positive integers, $q$ a prime power, and $\alpha \in \Fqmn$. 
An $m$-dimensional $\Fq$-linear subspace $W$ of $\Fqmn$ is said to be \emph{$\alpha$-splitting} if 
\begin{displaymath}
\Fqmn = W \oplus \alpha W \oplus \cdots \oplus \alpha^{n-1}W.
\end{displaymath}
Concerning these, Niederreiter \cite{N2} asked the following: 
given 
$\alpha\in \Fqmn$ such that $\Fqmn=\Fq(\alpha)$, what is the number of 
$m$-dimensional $\alpha$-splitting subspaces of $\Fqmn$?

Actually, the above question is a slightly more general version of the original question stated as an open problem in \cite[p. 11]{N2} where it is assumed that $q$ is prime 
and $\alpha$ is a primitive element  of $\Fqmn$ in the sense that it is a generator of the cyclic group 
$\Fqmn^*$ of nonzero elements of $\Fqmn$. 
But this general version seems quite natural and we will always consider Niederreiter's question in this setting. 

The main aim of this article is to make Niederreiter's question better known and to facilitate further research on it. We were motivated by the fact that till recently there had not been any significant progress on this question since it was posed more than 15 years ago. Recent progress came about primarily by relating this question to seemingly different questions in cryptography. This brought to the fore exciting connections not only with cryptography but also matrix theory and finite projective geometry via the so called block companion Singer cycles. As a result, a quantitative formulation of Niederreiter's question suggested itself and a small breakthrough was obtained in the form of a solution in the case of splitting planes, i.e., when $m=2$. We refer to Appendix \ref{app} and to \cite{Zeng, GSM, GR, L} for these developments. While the myriad connections are no doubt interesting, we wish to underline the fact that Niederreiter's question is a beautiful problem that is easy to state and 
is of interest in itself. With this in view, we give here an 
account of the recent progress on this question by focusing mainly on splitting subspaces \emph{per se} and relegating its connections to cryptography and such to an appendix at the end. 
In particular, we include a short and self-contained proof of the solution to 
Niederreiter's question in the case of splitting planes. 
Our original proof (cf. \cite{GR}) in the case $m=2$ used a result of Benjamin and Bennett \cite{BB}, which in turn was motivated by a question of Corteel, Savage, Wilf, and Zeilberger \cite{CSWZ} (see \cite[Rem. 4.2]{GGR} for more historical information). Here we have 
removed the dependence on Benjamin and Bennett \cite{BB} by 
means of an auxiliary result (Lemma \ref{GenBB}) 
and given a quick and independent proof of it by modifying an argument in \cite[Thm. 4.1]{GGR}. 
We have also used this opportunity to include certain 
variants of 
Niederreiter's question and 
some preliminary results concerning them. 
Finally, for the convenience of the reader (and at the suggestion of a referee), we include a brief appendix  where interconnections with cryptography, Singer cycles, etc., have been outlined. 

\section{Easy cases and guesses}
\label{sec1}

Fix, throughout this paper, positive integers $m,n$ and a prime power $q$.
Let us first note 
that for an arbitrary $\alpha\in \Fqmn$, there may not be any $\alpha$-splitting subspace; for example, if $\alpha\in \Fq$, then $\alpha^iW=W$ for every $m$-dimensional subspace $W$ and every $i\ge 0$, and so $W$ cannot be $\alpha$-splitting if $n>1$. 
To avoid such situations, we will always assume that $\alpha\in \Fqmn$ satisfies $\Fqmn=\Fq(\alpha)$. In this case,  
$\left\{1, \alpha, \alpha^2, \dots , \alpha^{mn-1}\right\}$ forms a $\Fq$-basis of $\Fqmn$ and hence 
$\left\{1, \alpha^n, \alpha^{2n}, \dots , \alpha^{(m-1)n}\right\}$ spans an  $m$-dimensional $\alpha$-splitting subspace of $\Fqmn$, say $\Lambda$.  
Let us define
$$
S(\alpha, m,n;q) := \text{the number of $\alpha$-splitting subspaces of $\Fqmn$ of dimension $m$}. 
$$
Niederreiter's question is to determine (a nice formula for) $S(\alpha, m,n;q)$. 
The case when $m$ or $n$ is equal to $1$ is quite trivial. Indeed, if $n=1$, then   
the only $m$-dimensional subspace, viz., $W=\Fqmn$, is $\alpha$-splitting for every  $\alpha\in \Fqmn$. 
On the other hand, if $m=1$ and if $\alpha\in \Fqmn=\FF_{q^n}$ is such that $\Fqmn=\Fq(\alpha)$, then every $1$-dimensional subspace is 
$\alpha$-splitting. Thus 
$$
S(\alpha, m,n;q) = \frac{q^{mn}-1}{q^m-1} \quad \text{ if } \min\{m, n\}=1.
$$
In fact, the fraction on the right is always a lower bound for $S(\alpha, m,n;q)$. To see this, it suffices to note 
two things: (i) if $\Lambda$ is as above,
then $\beta\Lambda$ is an $m$-dimensional $\alpha$-splitting subspace 
for every $\beta \in \Fqmn^*$, since  
$\alpha^j\beta = \beta\alpha^j$ for $0\le j\le n-1$, 
and (ii) $\Lambda = \Fq(\alpha^n)$ and $\Lambda^*$ is a subgroup of the cyclic group $\Fqmn^*$ of index $(q^{mn}-1)/(q^m-1)$ so that if we let 
$\beta \in \Fqmn^*$ vary over representatives of distinct cosets of $\Lambda^{*}$ in $\Fqmn^*$, then the corresponding subspaces 
$\beta\Lambda$ are distinct (in fact, essentially disjoint). \linebreak 
To work out a slightly 
nontrivial example, let us suppose $m=2$ and $n=2$. 
Recall that for any integers $a,b$ with $a\ge b\ge 0$, the number of $b$-dimensional subspaces of an $a$-dimensional vector space over $\Fq$ is given by the Gaussian binomial coefficient
$$
{\genfrac{[}{]}{0pt}{}{a}{b}}_q 
:= \frac{(q^{a}-1)(q^{a}-q)\cdots (q^{a}-q^{{b}-1})}{(q^{b}-1)(q^{b}-q)\cdots (q^{b}-q^{{b}-1})}.
$$
Let $W$ be subspace of $\FF_{q^4}$. Note that $W = \alpha W$ if and only if $W=0$ or $W=\FF_{q^4}$ (indeed, if $0\ne x\in W = \alpha W$, then 
$W$ contains the linearly independent elements $x, \, \alpha x, \, \alpha^2 x, \, \alpha^3 x$ and so $W=\FF_{q^4}$).
Now suppose $\dim W=2$ and $W$ is not $\alpha$-splitting. 
Then $L=W\cap \alpha W$ is a $1$-dimensional and $W = L + \alpha^{-1}L$. Conversely, if $L$ is a $1$-dimensional subspace of $\FF_{q^4}$, then 
$L + \alpha^{-1}L$ is a $2$-dimensional subspace of $\FF_{q^4}$ that is not $\alpha$-splitting. It follows that 
$$
S(\alpha, 2,4;q) = 
{\genfrac{[}{]}{0pt}{}{4}{2}}_q - {\genfrac{[}{]}{0pt}{}{2}{1}}_q 
= \frac{q^{4}-1}{q^2-1} \, q^2. 
$$
We now take an inspired leap and propose the following quantitative formulation of Niederreiter's question.

\medskip

{\bf Splitting Subspace Conjecture:} Let $\alpha\in \Fqmn$ satisfy 
$\Fqmn=\Fq(\alpha)$. Then
$$
S(\alpha, m,n;q) = \frac{q^{mn}-1}{q^m-1} \, q^{m(m-1)(n-1)}.
$$

\medskip

To be sure, the conjectural formula fits well with the 
examples considered above as well as the general lower bound for 
$S(\alpha, m,n;q)$. Still to arrive at it based only on a few examples is indeed quite a leap. As alluded to in the Introduction and explained in the appendix, the true inspiration, in fact, comes from a recent conjecture of Zeng, Han and He \cite{Zeng} and the subsequent work in \cite{GSM} and \cite{GR}. But at any rate, we have a nice specific problem, which seems to be open, in general. Its solution in the only nontrivial case known so far 
will be considered next. 

\section{Splitting planes}
\label{sec2}

We begin with a simple but useful observation 
that goes back to Niederreiter \cite[Lem. 3]{N2} and says the enumeration of splitting subspaces of $\Fqmn$ is equivalent to the enumeration of certain ordered bases of $\Fqmn$. To make this more precise, let us introduce some notation. 

Given any 
$\alpha, v_1, \dots, v_m\in \Fqmn$, we let 
$$
\bavm:=\left\{v_1, \dots , v_m, \, \alpha v_1, \dots , \alpha v_m, \, \dots , \,  \alpha^{n-1} v_1,\dots , \alpha^{n-1} v_m\right\}, 
$$  
with the understanding that $\bavm$ is to be regarded as an ordered set with $mn$ elements. In case $\bavm$ is an ordered basis of $\Fqmn$,
the set $\{v_1, \dots , v_m\}$ is necessarily a $\Fq$-basis of an $m$-dimensional subspace of $\Fqmn$ and we will refer to $\bavm$ as an 
\emph{$\alpha$-splitting ordered basis} of $\Fqmn$. The number of $\alpha$-splitting ordered bases of $\Fqmn$ will be denoted by
$N(\alpha, m,n;q)$. 
%

\begin{lemma}
\label{splitandbases}
Let $\alpha\in \Fqmn$ satisfy 
$\Fqmn = \Fq(\alpha)$, and let $v_1, \dots , v_m\in \Fqmn$. 
Then $\bavm$ is an ordered basis of $\Fqmn$ if and only if 
$\{v_1, \dots , v_m\}$ span an $m$-dimensional $\alpha$-splitting  subspace of $\Fqmn$. Consequently, 
$$
S(\alpha, m,n;q) = \frac{N(\alpha, m,n;q)}{\left|\GL_m(\Fq)\right|}, \quad \text{that is,} \quad N(\alpha, m,n;q) = S(\alpha, m,n;q) \prod_{i=0}^{m-1}(q^m-q^i).
$$
\end{lemma}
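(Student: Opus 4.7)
The plan is to first prove the equivalence and then derive the counting identity as a bookkeeping consequence. The engine driving everything is the observation that since $\Fqmn=\Fq(\alpha)$ with $mn\ge 2$, we have $\alpha\neq 0$, so multiplication by $\alpha$ is an $\Fq$-linear automorphism of $\Fqmn$. In particular, for any $\Fq$-subspace $U\subseteq \Fqmn$ and any $i\ge 0$, the map $u\mapsto \alpha^i u$ sends a basis of $U$ to a basis of $\alpha^i U$, so $\dim \alpha^i U=\dim U$.

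For the forward direction of the ``if and only if'', I would assume that $\{v_1,\dots,v_m\}$ spans an $m$-dimensional $\alpha$-splitting subspace $W$. Then $(v_1,\dots,v_m)$ is an ordered basis of $W$, and by the previous paragraph $(\alpha^i v_1,\dots,\alpha^i v_m)$ is an ordered basis of $\alpha^i W$ for $0\le i\le n-1$. Concatenating these bases in the order prescribed by the decomposition $\Fqmn = W\oplus \alpha W\oplus\cdots\oplus \alpha^{n-1}W$ yields exactly $\bavm$, which is therefore an ordered basis of $\Fqmn$.

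For the reverse direction, I would assume $\bavm$ is an ordered basis of $\Fqmn$. Its first $m$ entries $v_1,\dots,v_m$ are then $\Fq$-linearly independent and span an $m$-dimensional subspace $W$; for $1\le i\le n-1$, the corresponding block spans $\alpha^i W$, an $m$-dimensional subspace. Because all $mn$ listed vectors are linearly independent, the sum $W+\alpha W+\cdots+\alpha^{n-1}W$ is direct (dimensions add), and since the total dimension is $mn$, it equals $\Fqmn$. Hence $W$ is $\alpha$-splitting.

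For the counting identity, I would partition the $\alpha$-splitting ordered bases of $\Fqmn$ according to the $m$-dimensional $\alpha$-splitting subspace $W$ that their first $m$ entries span. By the equivalence just proved, each $\alpha$-splitting ordered basis arises from a unique such $W$ together with an ordered $\Fq$-basis $(v_1,\dots,v_m)$ of $W$, and conversely any choice of such $W$ and ordered basis produces an $\alpha$-splitting ordered basis. Since the number of ordered bases of an $m$-dimensional $\Fq$-space is $|\GL_m(\Fq)|=\prod_{i=0}^{m-1}(q^m-q^i)$, summing over the $S(\alpha,m,n;q)$ splitting subspaces gives $N(\alpha,m,n;q)=S(\alpha,m,n;q)\,|\GL_m(\Fq)|$. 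There is no genuine obstacle here; the only point that needs care is verifying that the linear independence of all $mn$ vectors in $\bavm$ forces both $\dim W=m$ and the directness of the sum, which is handled uniformly by the dimension-preservation observation above.
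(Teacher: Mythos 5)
Your proof is correct and follows essentially the same route as the paper: the paper declares the equivalence between $\alpha$-splitting ordered bases and bases of $\alpha$-splitting subspaces ``obvious'' and then derives the counting identity from $|\GL_m(\Fq)|$ exactly as you do. You have simply filled in the ``obvious'' step in detail (via the dimension-preservation argument for multiplication by $\alpha$), which is sound and adds no new idea beyond what the paper takes for granted.
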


\begin{proof}
The first assertion is obvious. The second follows from the first by noting that the number of distinct ordered bases of an $m$-dimensional vector space over $\Fq$ is 
$\left|\GL_m(\Fq)\right| = \prod_{i=0}^{m-1}(q^m-q^i)$. 
\end{proof}

From now on, we will focus on the case of splitting planes, i.e., the case $m=2$.

\begin{lemma}
\label{nobases}
Let $\alpha\in \Fqtn$ be such that $\Fqtn = \Fq(\alpha)$. 
Then 
$$
N(\alpha, 2,n;q) = \left(q^{2n} - \nu -1\right)(q^{2n} - 1), 
$$
where 
$\nu$ denotes the cardinality of the set
$\Sigma$ of pairs $(f_1,f_2)$ of nonzero polynomials in $\Fq[X]$ of degree $< n$ 
with $f_2$ monic and $f_1, f_2$ 
relatively prime. 
\end{lemma}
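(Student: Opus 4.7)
The plan is to fix $v_1$ first, show that every nonzero choice automatically gives a linearly independent ``left half'' $\{v_1,\alpha v_1,\dots,\alpha^{n-1}v_1\}$, and then count the $v_2\in \Fqtn$ that fail to complete this to a basis. Concretely, since $\Fqtn=\Fq(\alpha)$, the minimal polynomial $p(X)$ of $\alpha$ over $\Fq$ has degree $2n$. For any nonzero $v_1$ and any $a\in\Fq[X]$ of degree $<n$, the relation $a(\alpha)v_1=0$ forces $a(\alpha)=0$ (as $\Fqtn$ is a field), and then $a=0$ since $\deg a<n<2n=\deg p$. So indeed $\{v_1,\alpha v_1,\dots,\alpha^{n-1}v_1\}$ is $\Fq$-linearly independent for every $v_1\in\Fqtn^{*}$, giving $q^{2n}-1$ choices for $v_1$.

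Next, for a fixed $v_1\neq 0$, I would call $v_2\in\Fqtn$ \emph{bad} if $\bav$ fails to be a basis, i.e.\ if there exist $a,b\in \Fq[X]$ of degree $<n$, not both zero, with
\[
a(\alpha)\,v_1+b(\alpha)\,v_2=0.
\]
The argument above shows $b\neq 0$ (else $a=0$ too), and since $\deg b<2n$, we have $b(\alpha)\neq 0$, so $v_2=-a(\alpha)b(\alpha)^{-1}v_1$. Thus $v_2\mapsto -v_2/v_1$ sets up a bijection between the bad $v_2$ and the set
\[
R:=\left\{\frac{a(\alpha)}{b(\alpha)} \,:\, a,b\in\Fq[X],\ \deg a<n,\ \deg b<n,\ b\neq 0\right\}\subseteq\Fqtn.
\]
The task is then to show $|R|=\nu+1$.

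For this, I would observe that two representatives $a_1/b_1$ and $a_2/b_2$ in $R$ yield the same element of $\Fqtn$ precisely when $a_1(\alpha)b_2(\alpha)=a_2(\alpha)b_1(\alpha)$, and since $\deg(a_1b_2-a_2b_1)<2n=\deg p$, this is equivalent to the polynomial identity $a_1b_2=a_2b_1$. Hence each $w\in R$ has a unique representative $c(\alpha)/d(\alpha)$ with $\gcd(c,d)=1$ and $d$ monic. The reduced representative satisfies $\deg c\le \deg a<n$ and $\deg d\le \deg b<n$, so either $c=0$ (forcing $d=1$, giving $w=0$) or $(c,d)\in\Sigma$. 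This gives $|R|=1+\nu$, so the number of bad $v_2$ equals $\nu+1$ and the number of good $v_2$ is $q^{2n}-\nu-1$. Multiplying by the $q^{2n}-1$ choices for $v_1$ gives the stated formula.

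The main obstacle is the uniqueness of the reduced representative $c/d$ inside the ``small'' window of degrees $<n$: one must invoke that any polynomial relation between numerators and denominators is really a polynomial identity (not just a congruence mod $p$), which is what the degree bound $<2n$ on $p$ affords. The rest is careful bookkeeping.
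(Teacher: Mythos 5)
Your argument is correct and follows essentially the same route as the paper: fix a nonzero $v_1$, reduce the problem to counting the forbidden quotients $v_2/v_1$, and identify that set with coprime pairs of low-degree polynomials via the key observation that a relation $a_1(\alpha)b_2(\alpha)=a_2(\alpha)b_1(\alpha)$ with $\deg(a_1b_2-a_2b_1)<2n$ must be a polynomial identity. The only cosmetic difference is that you count the bad set $R=\Sigma_\alpha\cup\{0\}$ of size $\nu+1$ directly, whereas the paper restricts to $\beta\in\Fqtn^{*}$ and counts $\Sigma_\alpha$ of size $\nu$; the two bookkeepings give the same total.
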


\begin{proof}
Fix 
$v_1\in \Fqtn$ with $v_1\ne 0$. Then for any $v_2\in \Fqtn$, the ordered set 
$$
\bav =\left\{v_1, \, v_2, \, \alpha v_1, \, \alpha v_2, \, \dots , \,  \alpha^{n-1} v_1,\, \alpha^{n-1} v_2\right\} 
$$ 
is a $\Fq$-basis of $\Fqtn$ if and only if the ordered set
$$
\Sb := \left\{1, \, \beta, \, \alpha , \, \alpha \beta, \, \dots , \,  \alpha^{n-1} ,\, \alpha^{n-1} \beta\right\}
$$  
is linearly independent over $\Fq$, where $\beta: = v_2/v_1$. Now, $1, \alpha, \dots ,\alpha^{2n-1}$ are linearly independent over $\Fq$ and in particular, so are $1, \alpha, \dots ,\alpha^{n-1}$. Thus for any $\beta \in \Fqtn^*$, the ordered set $\Sb$ is $\Fq$-independent if and only if $\beta$ cannot be expressed as 
$$
\frac{a_0+ a_1 \alpha + \cdots + a_{n-1} \alpha^{n-1}}{b_0+ b_1 \alpha + \cdots + b_{n-1} \alpha^{n-1}}
$$
for some $a_i,b_i\in \Fq$ such that not all $a_i$ are zero and not all $b_i$ are zero ($0\le i\le n-1$). It follows that 
$\big\{\beta \in \Fqtn^* : \Sb \text{ is linearly independent}\big\} =  \Fqtn^* \setminus \Sigma_{\alpha}$, where 
$$
\Sigma_{\alpha} : = \left\{\frac{p_1(\alpha)}{p_2(\alpha)} : p_i\in \Fq[X]^*\text{ with } \deg (p_i) < n \text{ for } i=1,2 \right\}.
$$
Now 
consider 
$$
\Sigma : = \left\{\frac{f_1}{f_2} : f_i\in \Fq[X]^*\text{ with } \deg (f_i) < n \text{ for } i=1,2 \text{ and } \GCD(f_1, f_2) = 1\right\}.
$$
The map $\Sigma \to \Sigma_{\alpha}$ given by $(f_1,f_2)\mapsto f_1(\alpha)/f_2(\alpha)$ is 
clearly well-defined and surjective. Moreover, if $(f_1,f_2), (g_1,g_2)\in \Sigma$ are such that $f_1(\alpha)g_2(\alpha)= g_1(\alpha)f_2(\alpha)$, then 
$f_1g_2=g_1f_2$ because the minimal polynomial of $\alpha$ over $\Fq$ has degree $2n$. Further since $\GCD(f_1, f_2) = 1= \GCD(g_1,g_2)$ and 
since $f_2, g_2$ are monic, it follows that $f_2=g_2$ and therefore $f_1=g_1$. Thus $\Sigma_{\alpha}$ is in bijection with $\Sigma$, 
and hence 
upon letting $\nu = |\Sigma|$, we find 
$$
\left|\left\{\beta \in \Fqtn^* : \Sb \text{ is linearly independent }\right\}\right| = (q^{2n}-1 - \nu). 
$$
Finally, if we vary $v_1$ over the $(q^{2n}-1)$ elements of $\Fqtn^*$, then we readily see that the number of ordered bases of the form $\bav$ is 
equal to $\left(q^{2n} - \nu -1\right)(q^{2n} - 1)$. 
\end{proof}

The cardinality $\nu$ of the set $\Sigma $ appearing in Lemma \ref{nobases} will be determined using the following more general result concerning 
pairs of relatively prime polynomials. 

\begin{lemma}
\label{GenBB}
Let $N_1, N_2$ be positive integers with 
$N_1\ge N_2$ and let $\nu(N_1, N_2)$ denote the number of ordered pairs $(f_1, f_2)$ 
of coprime nonzero polynomials in $\Fq[X]$ with $f_2$  monic 
and $\deg f_i < N_i$ for $i=1,2$. 
Then 
$\nu(N_1, N_2) = q^{N_1+N_2-1} - 1. $
\end{lemma}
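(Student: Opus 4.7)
The plan is to determine $\nu(N_1,N_2)$ by a Möbius-style decomposition through the gcd, followed by an induction on $N_2$. First I introduce the auxiliary count
\[
A(N_1,N_2) := \#\{(f_1, f_2) \in \Fq[X]^2 : f_1 \neq 0,\ \deg f_1 < N_1,\ f_2 \text{ monic},\ \deg f_2 < N_2\},
\]
with no coprimality condition imposed. There are $q^{N_1}-1$ choices for $f_1$ and $1 + q + \cdots + q^{N_2-1} = (q^{N_2}-1)/(q-1)$ monic choices for $f_2$, so
\[
A(N_1,N_2)=(q^{N_1}-1)\cdot\frac{q^{N_2}-1}{q-1}.
\]

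Every pair counted by $A(N_1,N_2)$ factors uniquely as $f_1 = d g_1$, $f_2 = d g_2$, where $d = \gcd(f_1, f_2)$ is taken monic; then $g_2$ is automatically monic and $g_1$ is nonzero, and $(g_1, g_2)$ is precisely a coprime pair of the type counted by $\nu(N_1 - \deg d, N_2 - \deg d)$. Partitioning by $e := \deg d \in \{0, 1, \dots, N_2-1\}$ and using that there are $q^e$ monic polynomials of degree $e$, I obtain the key identity
\[
A(N_1,N_2) = \sum_{e=0}^{N_2-1} q^e\, \nu(N_1 - e,\, N_2 - e).
\]

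I would then invert this identity for $\nu$ by induction on $N_2$, noting that the hypothesis $N_1 \ge N_2$ is preserved since $N_1 - e \ge N_2 - e$ for every $e$. The base case $N_2 = 1$ forces $f_2 = 1$, so $\nu(N_1, 1) = q^{N_1} - 1$, matching the claim. For the inductive step, I isolate the $e = 0$ term of the key identity and substitute $\nu(N_1 - e, N_2 - e) = q^{N_1 + N_2 - 2e - 1} - 1$ for $1 \le e \le N_2 - 1$; what remains is a routine simplification of two geometric sums, which collapses to $q^{N_1 + N_2 - 1} - 1$. I foresee no real conceptual obstacle: the key insight is the unique gcd factorization of such pairs, after which the induction runs mechanically.
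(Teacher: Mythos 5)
Your proof is correct and rests on exactly the same key idea as the paper: partition the pairs by the degree of their gcd to obtain the identity
\[
(q^{N_1}-1)\,\frac{q^{N_2}-1}{q-1} \;=\; \sum_{e=0}^{N_2-1} q^{e}\,\nu(N_1-e,\,N_2-e),
\]
together with the base case $N_2=1$. Where you diverge is only in the final step: you invert the identity by strong induction on $N_2$, substituting the conjectured formula into the tail of the sum and simplifying two geometric series. The paper instead avoids induction altogether with a short telescoping trick: it writes down the same identity with $(N_1,N_2)$ replaced by $(N_1-1,N_2-1)$, multiplies that shifted identity by $q$, and subtracts; all terms with $e\ge 1$ cancel at once, leaving $\nu(N_1,N_2)$ alone on one side with a two-line algebraic simplification on the other. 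The telescoping version is a bit slicker since it does not require already knowing the closed form, but your inductive route is entirely sound and the arithmetic checks out.
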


\begin{proof}
We can partition the set of  ordered pairs $(f_1, f_2)$ 
of nonzero polynomials in $\Fq[X]$ with $f_2$  monic 
and $\deg f_i < N_i$ for $i=1,2$  into disjoint subsets
$S_d$ ($0\le d < N_2$), 
where $S_d$ consists of 
pairs whose GCD is of degree $d$.
Given any monic polynomial $h\in \Fq[X]$ of degree $d$ and any coprime pair $\bigl(g_1, g_2\bigr)$ of nonzero polynomials with $g_2$ monic 
and $\deg g_i < N_i - d$ for $i = 1, 2$, it is easy to see that $\left(hg_1,  hg_2\right)\in S_d$. 
Conversely, if $\bigl(f_1,  f_2\bigr) \in S_d$, then the polynomial
$h = {\rm GCD}\bigl(f_1, f_2\bigr)$ is monic of degree $d$ and $(f_1/h, f_2/h)$
is a 
coprime pair comprising of a nonzero polynomial of degree $N_1-d$ and a monic polynomial of degree $N_2-d$.  
This shows that $\left|S_d\right| = q^{d} \nu(N_1-d, N_2-d)$ for $0\le d < N_2$. Since for any positive integer $N$, there are $(q^N-1)$ nonzero polynomials in $\Fq[X]$ 
of degree $<N$ and of these exactly $(q^N-1)/(q-1)$ are monic, it follows that  
\begin{equation} \label{Eq1}
\left(q^{N_1} -1 \right) \frac{\left(q^{N_2} -1\right)}{q-1} = \sum_{0\le d< N_2} \left|S_d \right|= \sum_{0\le d< N_2} q^{d} \nu\left(N_1-d, N_2-d\right).
\end{equation}
If $N_2 = 1$, we immediately obtain $\nu (N_1, N_2) = q^{N_1}-1$. On the other hand, if $N_2 > 1$, then substituting $N_i-1$ for $N_i$ ($i=1, 2$) in the above relation yields
\begin{equation}\label{Eq2}
\left(q^{N_1-1} -1 \right) \frac{\left(q^{N_2-1} -1\right)}{q-1} = \sum_{1\le d< N_2} q^{d-1} \nu\left(N_1-d, N_2-d\right).
\end{equation}
Multiplying equation \eqref{Eq2} by $q$ and subtracting the result from \eqref{Eq1}, and then making an elementary calculation, we see that 
$\nu\left(N_1,N_2\right) =q^{N_1+N_2-1} - 1$. 
\end{proof}

It is now a simple matter to show that the Splitting Subspace Conjecture holds in the affirmative when $m=2$ (and $n$ is arbitrary). 

\begin{theorem}
\label{mtwoconj}
Let $\alpha\in \Fqtn$ be such that $\Fqtn = \Fq(\alpha)$. Then 
$$
S(\alpha, 2,n;q) = \displaystyle \frac{q^{2n}-1}{q^2-1} \, q^{2(n-1)} .
$$

\end{theorem}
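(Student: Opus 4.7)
The plan is to combine the three preceding lemmas, which together have reduced the enumeration problem to a short arithmetic calculation.

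First, I would invoke Lemma \ref{GenBB} in the symmetric case $N_1 = N_2 = n$. The set $\Sigma$ appearing in Lemma \ref{nobases} is exactly $\nu(n,n)$ in the notation of Lemma \ref{GenBB}, so I obtain immediately
$$\nu = q^{2n-1} - 1.$$

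Next, I would plug this value of $\nu$ into the formula of Lemma \ref{nobases} to compute $N(\alpha, 2, n; q)$. The factor $q^{2n} - \nu - 1$ simplifies neatly to $q^{2n} - q^{2n-1} = q^{2n-1}(q-1)$, giving
$$N(\alpha, 2, n; q) = q^{2n-1}(q-1)(q^{2n}-1).$$

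Finally, I would apply Lemma \ref{splitandbases} with $m=2$. Since $|\GL_2(\Fq)| = (q^2-1)(q^2-q) = q(q-1)(q^2-1)$, dividing the expression above yields
$$S(\alpha, 2, n; q) = \frac{q^{2n-1}(q-1)(q^{2n}-1)}{q(q-1)(q^2-1)} = \frac{q^{2n-2}(q^{2n}-1)}{q^2-1},$$
which is the asserted formula. There is no real obstacle at this stage: the substantive work has been done in Lemma \ref{nobases} (the reduction to coprime polynomial pairs via the substitution $\beta = v_2/v_1$) and Lemma \ref{GenBB} (the inductive/telescoping count of coprime pairs). Once those are in hand, the theorem is a one-line arithmetic consequence, and the only thing to double-check is the cancellation of the factor $(q-1)$ between $|\GL_2(\Fq)|$ and the numerator.
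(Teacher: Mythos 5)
Your proposal is correct and follows exactly the same route the paper takes: the paper's proof of Theorem~\ref{mtwoconj} is a one-line citation of Lemmas~\ref{splitandbases}, \ref{nobases}, and~\ref{GenBB} (despite the paper's typographical misreference to the theorem itself), and your computation simply carries out the arithmetic that the paper leaves implicit. The substitution $\nu = \nu(n,n) = q^{2n-1}-1$, the simplification $q^{2n}-\nu-1 = q^{2n-1}(q-1)$, and the division by $|\GL_2(\Fq)| = q(q-1)(q^2-1)$ are all as intended.
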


\begin{proof}
Follows from Lemmas \ref{splitandbases}, \ref{nobases}, and  \ref{mtwoconj}.
\end{proof}

\section{Refinements and Extensions} 
\label{sec:asymp}


For $\alpha\in \Fqmn$, let $\mathfrak{S}_{\alpha}$ denote the set of 
all $m$-dimensional $\alpha$-splitting subspaces of $\Fqmn$. 
By a \emph{pointed $\alpha$-splitting subspace} of dimension $m$ we shall mean a pair $(W,x)$ where $W \in \mathfrak{S}_{\alpha}$ 
and $x\in W$. The element $x$ may be referred to as the \emph{base point} of $(W,x)$. Given any $x\in \Fqmn$, we let
$
\mathfrak{S}_{\alpha}^x:=\left\{W\in \mathfrak{S}_{\alpha}: x\in W\right\}.
$

\begin{proposition}
\label{elemsplitprop}
Let $\alpha\in \Fqmn$ be such that $\Fqmn = \Fq(\alpha)$. Then 
$$
\left|\mathfrak{S}_{\alpha}^x\right| =\left|\mathfrak{S}_{\alpha}^y\right|  \quad \text{for any $x,y\in \Fqmn^*$.}
$$
Moreover, for any $x\in \Fqmn^*$, the set $\mathfrak{S}_{\alpha}^x$ is nonempty and 
$$
S(\alpha, m,n;q) 
= \left|\mathfrak{S}_{\alpha}^x\right|  \frac{q^{mn}-1}{q^m-1} . 
$$
\end{proposition}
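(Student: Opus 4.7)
The plan is to exploit the multiplicative action of $\Fqmn^*$ on $\mathfrak{S}_\alpha$ and then finish with a standard double counting argument. The key observation is that for any $\beta\in\Fqmn^*$ and any $i\ge 0$, commutativity of $\Fqmn$ gives $\alpha^i(\beta W) = \beta(\alpha^i W)$, so if $W\in\mathfrak{S}_\alpha$ then $\beta W\in\mathfrak{S}_\alpha$. Hence $W\mapsto \beta W$ is a bijection $\mathfrak{S}_\alpha \to \mathfrak{S}_\alpha$, and it clearly restricts to a bijection $\mathfrak{S}_\alpha^x \to \mathfrak{S}_\alpha^{\beta x}$.

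To prove the first assertion, given $x,y\in\Fqmn^*$, simply take $\beta = yx^{-1}\in\Fqmn^*$; the restricted map above is then a bijection $\mathfrak{S}_\alpha^x \to \mathfrak{S}_\alpha^y$, so $\bigl|\mathfrak{S}_\alpha^x\bigr| = \bigl|\mathfrak{S}_\alpha^y\bigr|$. For nonemptiness, recall from Section \ref{sec1} that the subspace $\Lambda$ spanned by $\{1, \alpha^n, \dots, \alpha^{(m-1)n}\}$ lies in $\mathfrak{S}_\alpha$. Since $1\in \Lambda$, the translate $x\Lambda$ is an element of $\mathfrak{S}_\alpha$ containing $x$, so $\mathfrak{S}_\alpha^x$ is nonempty for every $x\in\Fqmn^*$.

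For the enumeration formula, I would count the set of pointed $\alpha$-splitting subspaces $\mathcal{P} = \{(W,x) : W\in\mathfrak{S}_\alpha,\ x\in W\setminus\{0\}\}$ in two ways. Summing over $W$ first, each $W\in\mathfrak{S}_\alpha$ contributes $q^m - 1$ choices of $x$, giving $|\mathcal{P}| = S(\alpha, m, n; q)(q^m - 1)$. Summing over $x$ first, each $x\in \Fqmn^*$ contributes $\bigl|\mathfrak{S}_\alpha^x\bigr|$, and by the first assertion this is constant in $x$, giving $|\mathcal{P}| = \bigl|\mathfrak{S}_\alpha^x\bigr|(q^{mn} - 1)$. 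Equating these and dividing by $q^m - 1$ yields the claimed identity.

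No step presents a genuine obstacle: the only subtlety is noticing that commutativity of $\Fqmn$ is what makes multiplication preserve the $\alpha$-splitting property, and that $1$ lies in the standard splitting subspace $\Lambda$ so that $x\Lambda$ gives a canonical element of $\mathfrak{S}_\alpha^x$.
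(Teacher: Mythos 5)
Your proof is correct and follows essentially the same route as the paper's: multiplication by $\beta = y/x$ gives the bijection $\mathfrak{S}_\alpha^x \to \mathfrak{S}_\alpha^y$, the translate $x\Lambda$ (equivalently, the span of $\{x\alpha^{in} : 0 \le i < m\}$) witnesses nonemptiness, and a double count of pointed splitting subspaces gives the formula. Your explicit restriction to $x \in W\setminus\{0\}$ in the double count is a slightly cleaner bookkeeping choice than the paper's (which tacitly handles the $x=0$ contribution), but the argument is the same.
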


\begin{proof}
If $x,y\in \Fqmn^*$ and $\beta=y/x$, then $W\mapsto \beta W$ gives a bijection of $ \mathfrak{S}_{\alpha}^x$ onto $ \mathfrak{S}_{\alpha}^y$.
Moreover, for any $x\in \Fqmn^*$, the $\Fq$-linear span of $\left\{x\alpha^{in}: 0\le i < m\right\}$
is clearly in $\mathfrak{S}_{\alpha}^x$ and thus
$\mathfrak{S}_{\alpha}^x$ is nonempty. Finally,  
by counting in two different ways the set $\left\{(W,x): W\in  \mathfrak{S}_{\alpha} \text{ and } x\in W\right\}$ of all pointed $\alpha$-splitting subspaces, we find $\left|\mathfrak{S}_{\alpha}\right| (q^m-1) = \left|\mathfrak{S}_{\alpha}^x\right|{(q^{mn}-1)}$, as desired. 
\end{proof}

It may be remarked that the lower bound for $S(\alpha, m,n;q)$ discussed in Section~\ref{sec1} is an immediate consequence of Proposition~\ref{elemsplitprop}.   
In light of Proposition \ref{elemsplitprop}, we 
see that the Splitting Subspace Conjecture is equivalent to the following simpler looking conjecture. 

\begin{conjecture}[Pointed Splitting Subspace Conjecture]
\label{PSSC}
Let $\alpha\in \Fqmn$ be such that $\Fqmn = \Fq(\alpha)$ and let $x\in \Fqmn^*$. Then the number of $m$-dimensional pointed $\alpha$-splitting subspaces  of $\Fqmn$ with base point $x$ is equal to $q^{m(m-1)(n-1)}$. 
\end{conjecture}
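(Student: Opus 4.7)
The plan is to attempt a generalization of the proof of Theorem~\ref{mtwoconj}, taking the pointed version as the natural setting. By Proposition~\ref{elemsplitprop}, $|\mathfrak{S}_\alpha^x|$ is independent of $x \in \Fqmn^*$, so I would fix $x = 1$ and count ordered bases of $\Fqmn$ of the form $\B^\alpha_{(1, v_2, \ldots, v_m)}$. The stabilizer in $\GL_m(\Fq)$ of the first standard basis vector has order $\prod_{i=1}^{m-1}(q^m - q^i)$, so, by an orbit argument analogous to Lemma~\ref{splitandbases}, the conjecture is equivalent to showing that the number $N^\ast(\alpha)$ of $(m-1)$-tuples $(v_2,\ldots,v_m) \in \Fqmn^{m-1}$ for which $\B^\alpha_{(1, v_2, \ldots, v_m)}$ is an $\Fq$-basis equals
\[
q^{m(m-1)(n-1)} \prod_{i=1}^{m-1}(q^m - q^i).
\]
For $m = 2$ this is exactly what is extracted from Lemma~\ref{nobases} together with Lemma~\ref{GenBB}, the factor $(q^{2n} - 1)$ in Lemma~\ref{nobases} accounting for the choice of $v_1 \in \Fqtn^*$.

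Next, I would parametrize $v_j = p_j(\alpha)$ for the unique $p_j \in \Fq[X]$ of degree $< mn$, and rewrite the basis condition in these coordinates. Expressing elements of $\Fqmn$ in the $\Fq$-basis $\{1, \alpha, \dots, \alpha^{mn-1}\}$, the family $\{\alpha^i p_j(\alpha) : 0 \le i < n,\; 1 \le j \le m\}$ is an $\Fq$-basis precisely when the $mn \times mn$ block Sylvester-type matrix whose rows are the coefficient vectors of $X^i p_j(X) \bmod \mu_\alpha(X)$ is invertible, where $\mu_\alpha$ is the degree-$mn$ minimal polynomial of $\alpha$. For $m = 2$, setting $p_1 = 1$ and performing the obvious elimination reduces invertibility to the coprimality of a pair of polynomials of degree $< n$, as in Lemma~\ref{nobases}; the count of such coprime pairs is then supplied by Lemma~\ref{GenBB}. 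The natural extension is to carry out an analogous row reduction on the block Sylvester matrix, arriving at an $m$-tuple of polynomials of degree $< n$ with a matrix-polynomial ``no common left divisor'' condition.

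The main obstacle is the third step: obtaining a generalization of Lemma~\ref{GenBB} that enumerates such $m$-tuples. The partition by $d = \deg \GCD(f_1, f_2)$ used in the proof of Lemma~\ref{GenBB}, which factors each $S_d$ as a monic polynomial of degree $d$ times a coprime pair of smaller degrees, is special to $m = 2$: for $m \ge 3$ the appropriate notion of ``common factor'' is a left GCD in the ring of polynomial matrices over $\Fq$, and the obvious recursion no longer closes. A promising alternative is to adopt the block-companion viewpoint of Appendix~\ref{app} and \cite{GSM}: reinterpret $N^\ast(\alpha)$ as the number of jointly $n$-cyclic $m$-tuples of vectors for the companion matrix of $\mu_\alpha$, and attempt an orbit count under the centralizer of this matrix, perhaps by way of the Primitive Vector Recurrence Conjecture. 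If the orbit sizes are governed by a polynomial identity uniform in $\alpha$, the desired factor $q^{m(m-1)(n-1)}$ would emerge; securing such uniformity across all admissible $\alpha$ is precisely where genuinely new input appears to be required.
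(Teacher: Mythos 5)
This statement is a conjecture, not a theorem of the paper: the paper itself establishes the Pointed Splitting Subspace Conjecture only for $m\le 2$ (via Theorem~\ref{mtwoconj} and Proposition~\ref{elemsplitprop}) and for $n\le 1$, and explicitly leaves the general case open. So there is no ``paper's own proof'' to measure your attempt against, and your proposal is, correctly, a plan rather than a proof.

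That said, the plan is sound as far as it goes, and the preliminary reductions check out. The equivalence with SSC via Proposition~\ref{elemsplitprop} and the orbit count with stabilizer $\prod_{i=1}^{m-1}(q^m-q^i)$ (the number of ordered bases of $W$ with prescribed first vector) are both correct, and for $m=2$ your normalization recovers $N^\ast(\alpha) = q^{2n}-\nu-1 = q^{2n-1}(q-1)$, matching $q^{2(n-1)}(q^2-q)$. You have also correctly located the genuine obstruction: Lemma~\ref{GenBB} exploits the Euclidean structure of $\Fq[X]$ through a clean partition by the degree of the GCD, and for $m\ge 3$ the analogous ``common left divisor'' condition on $m$-tuples (or, in the matrix-polynomial language, on block Sylvester/resultant-type matrices) does not admit the same telescoping recursion, so the count does not close. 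Your alternative of passing to block companion matrices and centralizer orbits as in Appendix~\ref{app} and \cite{GSM, GR, L} is exactly the line pursued in the literature cited here, and the paper itself records that even with Lachaud's finer similarity-class analysis the general case remained open at the time of writing. In short: your reductions are correct, your identification of where new ideas are needed is accurate, and the honest acknowledgement that the gap is real is the right conclusion for this particular statement.

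One small caution on the middle step: you assert that after parametrizing $v_j=p_j(\alpha)$ and ``performing the obvious elimination,'' invertibility of the block Sylvester matrix reduces for general $m$ to a no-common-left-divisor condition on an $m$-tuple of degree-$<n$ polynomials. This is established in the paper only for $m=2$ (the passage from $\bav$ to $\Sb$ and then to coprimality in Lemma~\ref{nobases} uses the quotient $\beta=v_2/v_1$ in an essential scalar way). For $m\ge 3$ you would need to justify that the row reduction can be carried out uniformly modulo $\mu_\alpha$, and that the resulting condition is independent of $\alpha$ beyond the hypothesis $\Fqmn=\Fq(\alpha)$; this is nontrivial and should not be taken for granted in a completed argument.
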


We remark that $q^{m(m-1)}$ is the number of nilpotent $m\times m$ matrices over $\Fq$, thanks to an old result of Fine and Herstein \cite{FH}. 
Thus a particularly nice way to prove 
the Pointed Splitting Subspace Conjecture could be to set up a natural bijection between $\mathfrak{S}_{\alpha}^x$ and the set of $(n-1)$-tuples (or if one prefers, pointed $n$-tuples) of nilpotent $m\times m$ matrices over $\Fq$. 

If the Splitting Subspace Conjecture were to hold in the affirmative, then an obvious consequence would be that the number of
$m$-dimensional pointed $\alpha$-splitting subspaces  of $\Fqmn$ is independent of the choice of $\alpha\in \Fqmn$ as long as it satisfies 
$\Fqmn = \Fq(\alpha)$. In other words, for any $\alpha, \beta \in \Fqmn$, 
$$
S(\alpha, m,n;q) = S(\beta, m,n;q) \quad \text{provided} \quad \Fqmn  = \Fq(\alpha)  = \Fq(\beta).
$$
In general we do not know if this weaker statement is true. The following result summarizes the cases where the answer is known. 

\begin{proposition}
\label{WeakSSC}
Let $\alpha \in \Fqmn$ be such that $\Fqmn  = \Fq(\alpha)$. If
$$
\beta = \frac{a\alpha^{q^r} +b}{c\alpha^{q^r} +d} \quad \text{for some nonnegative integer $r$ and } \begin{pmatrix} a & b \\ c & d \end{pmatrix} \in \GL_2(\Fq),
$$
then $\Fqmn = \Fq(\beta)$ and $S(\alpha, m,n;q) = S(\beta, m,n;q)$.
\end{proposition}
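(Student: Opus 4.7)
The plan is to factor the map $\alpha \mapsto \beta = (a\alpha^{q^r}+b)/(c\alpha^{q^r}+d)$ into a composition of three elementary operations and to verify that each one preserves both the property $\Fqmn=\Fq(\cdot)$ and the cardinality of the splitting-subspace set $\mathfrak{S}_{\cdot}$. The three operations are: (a) Frobenius $\alpha\mapsto\alpha^q$; (b) $\Fq$-affine substitution $\alpha\mapsto a\alpha+b$ with $a\in \Fq^*$, $b\in \Fq$; and (c) inversion $\alpha\mapsto 1/\alpha$. When $c=0$ only (a) and (b) are needed, and when $c\ne 0$ the standard decomposition $\beta = a/c + ((bc-ad)/c)\cdot 1/(c\alpha^{q^r}+d)$ realizes $\beta$ from $\alpha^{q^r}$ as affine-then-inversion-then-affine. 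Invariance of the condition $\Fqmn=\Fq(\cdot)$ under each step is immediate: Frobenius is a field automorphism of $\Fqmn/\Fq$, while the affine and inversion operations manifestly preserve the subfield generated.

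For (a), $\phi(x)=x^q$ is an $\Fq$-linear bijection of $\Fqmn$, so applying $\phi$ termwise to $\Fqmn=\bigoplus_{i=0}^{n-1}\alpha^i W$ yields $\Fqmn=\bigoplus_{i=0}^{n-1}(\alpha^q)^i\phi(W)$; thus $W\mapsto \phi(W)$ is a bijection $\mathfrak{S}_{\alpha}\to \mathfrak{S}_{\alpha^q}$, and iteration handles $\alpha^{q^r}$. For (c), with $\delta=1/\alpha$, multiplying $\Fqmn=\bigoplus_{i=0}^{n-1}\alpha^i W$ through by $\alpha^{-(n-1)}$ gives $\Fqmn=\bigoplus_{j=0}^{n-1}\delta^j(\alpha^{n-1}W)$, so $W\mapsto \alpha^{n-1}W$ is a bijection $\mathfrak{S}_{\alpha}\to \mathfrak{S}_{\delta}$. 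For (b), with $\gamma=a\alpha+b$, the ordered sets $\{1,\gamma,\ldots,\gamma^{n-1}\}$ and $\{1,\alpha,\ldots,\alpha^{n-1}\}$ span the same $\Fq$-subspace of $\Fqmn$, since the change-of-basis matrix is upper triangular with nonzero diagonal $1,a,a^2,\ldots,a^{n-1}$; hence for every $\Fq$-subspace $W$ the sums $\sum_{i=0}^{n-1}\gamma^iW$ and $\sum_{i=0}^{n-1}\alpha^i W$ coincide as subspaces of $\Fqmn$, which forces $\mathfrak{S}_{\gamma}=\mathfrak{S}_{\alpha}$ as subsets of the Grassmannian.

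Composing the bijections above (with the identity in case (b)) in the order dictated by the factorization yields a bijection $\mathfrak{S}_{\alpha}\to \mathfrak{S}_{\beta}$, and hence $S(\alpha,m,n;q)=S(\beta,m,n;q)$. The only mildly delicate point is step (b): one must pass from equality of the ambient sums to equality of direct-sum decompositions. This transfer is automatic, however, because a sum of $n$ subspaces of dimension $m$ equals $\Fqmn$ as a direct sum iff the total dimension of the sum is $mn$, and the two sums in question already coincide as subspaces and so have the same dimension. No step presents a genuine obstacle, and the proof reduces to bookkeeping the three elementary invariances through the factorization.
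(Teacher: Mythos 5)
Your proof is correct and takes essentially the same route as the paper: decompose the fractional-linear-plus-Frobenius substitution into the elementary operations (Frobenius, $\Fq$-affine, inversion) and show each preserves $\mathfrak{S}_{\cdot}$ up to bijection, with the paper handling scaling and translation separately and you combining them into one affine step. One small slip in step (c): multiplying $\Fqmn=\bigoplus_{i=0}^{n-1}\alpha^i W$ through by $\alpha^{-(n-1)}$ actually yields $\Fqmn=\bigoplus_{j=0}^{n-1}\delta^j W$ (not $\bigoplus_{j=0}^{n-1}\delta^j(\alpha^{n-1}W)$, which is merely the original sum re-indexed), so the cleanest conclusion is the paper's own, namely $\mathfrak{S}_{1/\alpha}=\mathfrak{S}_{\alpha}$ via the identity map; your map $W\mapsto\alpha^{n-1}W$ is also a valid bijection $\mathfrak{S}_{\alpha}\to\mathfrak{S}_{\delta}$, just not the one produced by that multiplication, so the argument still goes through.
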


\begin{proof}
First, note that if $\beta = c\alpha$ for some $c\in \Fq^*$ or $\beta = \alpha +d$ for some $d\in \Fq$, then
$\Fqmn = \Fq(\beta)$ and in view of Lemma \ref{splitandbases}, we see that $\mathfrak{S}_{\alpha} = \mathfrak{S}_{\beta}$ 
and so $S(\alpha, m,n;q) = S(\beta, m,n;q)$. Further, if $\alpha \ne 0$ (which is necessarily the case if $mn>1$), then 
$\Fqmn = \Fq(1/\alpha)$ and again in view of Lemma \ref{splitandbases} and the fact that multiplication by the nonzero element $\alpha^{-(n-1)}$ preserves linear independence, it follows that 
$\mathfrak{S}_{1/\alpha} = \mathfrak{S}_{\alpha}$ and so $S(\alpha, m,n;q) = S(1/\alpha, m,n;q)$. Finally, note that for any nonnegative integer $r$, the elements 
$\alpha$ and $\alpha^{q^r}$ are Galois conjugate, i.e., they have the same minimal polynomial over $\Fq$, and therefore 
$\Fqmn = \Fq\left(\alpha^{q^r}\right)$ and there is a one-to-one correspondence between $\alpha$-splitting and $\alpha^{q^r}$-splitting subspaces,  induced by the corresponding element of the Galois group of $\Fqmn$ over $\Fq$. Combining these, we obtain the desired result. 
\end{proof}

It may be remarked that in view of Proposition \ref{WeakSSC} and the normal basis theorem \cite[p. 60]{LN}, we see that there is a $\Fq$-basis ${\EuScript B}$ 
of $\Fqmn$ such that each element of ${\EuScript B}$ generates $\Fqmn$ over $\Fq$ and $S(\alpha, m,n;q) = S(\beta, m,n;q)$ for all $\alpha, \beta \in {\EuScript B}$. 

Finally, we note that Niederreiter's question can also be posed in a more general situation where instead of considering multiples of an
$m$-dimensional subspace by powers of $\alpha$, we consider its transforms by an endomorphism of $\Fqmn$. More precisely, given any
$\Fq$-linear endomorphism $T: \Fqmn \to \Fqmn$, we say that an $m$-dimensional subspace $W$ of $\Fqmn$ is \emph{$T$-splitting} if 
$$
\Fqmn = W \oplus T(W) \oplus T^2(W) \oplus \cdots \oplus T^{n-1}(W),
$$
where $T^j$ denotes the $j$-fold composite of $T$ with itself ($0\le j < n$). We let 
$$
S_T(m,n;q) = \text{the number of $m$-dimensional $T$-splitting subspaces of } \Fqmn.
$$
Evidently, if $T$ is the $\Fq$-linear endomorphism of $\Fqmn$ given by $x\mapsto \alpha x $, then $S_T(m,n;q) = S(\alpha,m,n;q)$. A more general variant of 
Niederreiter's question is to determine $S_T(m,n;q)$ for every $\Fq$-linear endomorphism $T$ of $\Fqmn$. An answer to this question does not seem to be known, even conjecturally. It should be noted, however, that certain restrictions on the structure of $T$ will be needed in order that 
$S_T(m,n;q)$ is nonzero and independent of the choice of $T$ in a suitable class. For example, if $m=1$, then the existence of $m$-dimensional $T$-splitting subspaces of $\Fqmn$ evidently forces $T$ to be cyclic and the minimal polynomial 
of $T$ to be the characteristic polynomial of $T$. 
A complete answer to the above variant of Niederreiter's question in this case is given below. 
On the other hand, if $n=1$, then  $W=\Fqmn$ is obviously the only $m$-dimensional $T$-splitting subspace, for any $T: \Fqmn \to \Fqmn$ and thus 
$S_T(m,1;q)=1$. 
 
\begin{proposition}
\label{EndoSSC}
Let $T: \Fqn \to \Fqn$ be a cyclic $\Fq$-linear endomorphism and 
let $p_T \in \Fq[X]$ be the minimal polynomial of $T$. Suppose 
$p_T=f_1^{e_1}\cdots f_k^{e_k}$
is the factorization of $p_T$ into positive powers of distinct monic irreducible polynomials $f_i \in \Fq[X]$ with $\deg(f_i)=n_i$ for $i=1, \dots , k$. Then 
$$
S_T(1,n;q)=
\frac{q^n}{q-1}  \prod_{i=1}^k\left(1-\frac{1}{q^{n_i}}\right). 
$$
\end{proposition}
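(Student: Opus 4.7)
The plan is to reduce the problem to counting cyclic vectors for $T$ and then to invoke the Chinese Remainder Theorem. Observe first that a 1-dimensional subspace $W = \Fq \cdot v$ of $\Fqn$ is $T$-splitting if and only if $\{v, Tv, T^2v, \ldots, T^{n-1}v\}$ is an $\Fq$-basis of $\Fqn$, i.e., if and only if $v$ is a \emph{cyclic vector} for $T$. Since any nonzero $\Fq$-scalar multiple of a cyclic vector is itself cyclic (scaling a basis yields a basis), distinct lines of cyclic vectors are in bijection with the orbits of $\Fq^*$ on the set $C_T$ of cyclic vectors, so it suffices to count $|C_T|$ and divide by $q-1$.

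To count $|C_T|$, I would exploit the $\Fq[X]$-module structure on $\Fqn$ in which $X$ acts as $T$. The hypothesis that $T$ is cyclic means precisely that, as an $\Fq[X]$-module, $\Fqn \cong \Fq[X]/(p_T)$, via the isomorphism sending a fixed cyclic vector $v_0$ to $1$. Under this identification, the cyclic vectors of $T$ correspond to the $\Fq[X]$-module-generators of $\Fq[X]/(p_T)$, which are exactly the units of the ring $\Fq[X]/(p_T)$.

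Next, I would unpack this unit count using the factorization $p_T = f_1^{e_1}\cdots f_k^{e_k}$. By CRT,
$$\Fq[X]/(p_T) \;\cong\; \prod_{i=1}^{k} \Fq[X]/(f_i^{e_i}),$$
so $|(\Fq[X]/(p_T))^*| = \prod_{i=1}^{k} |(\Fq[X]/(f_i^{e_i}))^*|$. Each factor is a local ring of cardinality $q^{n_i e_i}$ whose unique maximal ideal $(f_i)/(f_i^{e_i})$ has cardinality $q^{n_i(e_i-1)}$; hence
$$\bigl|\bigl(\Fq[X]/(f_i^{e_i})\bigr)^*\bigr| = q^{n_i e_i} - q^{n_i(e_i-1)} = q^{n_i(e_i-1)}(q^{n_i} - 1).$$

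Finally, using $\sum_i n_i e_i = \deg p_T = n$, the desired formula drops out after dividing by $q-1$:
$$S_T(1,n;q) = \frac{|C_T|}{q-1} = \frac{1}{q-1}\prod_{i=1}^{k} q^{n_i(e_i-1)}(q^{n_i} - 1) = \frac{q^n}{q-1}\prod_{i=1}^{k}\left(1 - \frac{1}{q^{n_i}}\right).$$
There is no substantive obstacle here: the content lies in the standard identification of cyclic vectors with units in $\Fq[X]/(p_T)$, and the rest is elementary bookkeeping made transparent by CRT and the local structure of $\Fq[X]/(f_i^{e_i})$.
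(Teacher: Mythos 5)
Your proposal is correct and follows essentially the same route as the paper: reduce the count to cyclic vectors for $T$, identify these with the polynomials of degree $<n$ that are coprime to $p_T$ (equivalently, the units of $\Fq[X]/(p_T)$), and divide by $q-1$. The only difference is cosmetic: the paper cites the known formula for the $q$-analogue of the Euler totient function $\Phi_q(p_T)$ from Lidl--Niederreiter, whereas you rederive it on the spot via the Chinese Remainder Theorem and the local structure of $\Fq[X]/(f_i^{e_i})$; this makes your write-up slightly more self-contained but does not change the argument.
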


\begin{proof}
Clearly any $1$-dimensional $T$-splitting subspace $W$ of $\Fqn$ is spanned by a cyclic vector for $T$. So 
it suffices to count the number of cyclic vectors for $T$. Let $v \in \Fqn$ be a cyclic vector for $T$. Then any other cyclic vector $w\in \Fqn$ of $T$ is necessarily of the form $f(T)v$ where $f\in \Fq[X]$ is 
such that $\deg f\leq n-1$. Hence $w$ is cyclic only if the $T$-annihilator of $w$ is precisely $p_T$. Now the annihilator of $f(T)v$ is $p_T$ if and only if $\gcd(f,p_T)=1$. Thus the number of $f \in \Fq[X]$ with $\deg f\leq n-1$ such that $w=f(T)v$ is cyclic is equal to the number of polynomials in  $\Fq[X]$ of degree $\leq n-1$ that are coprime to $p_T$. This is given by the $q$-analogue of the Euler totient function (cf. \cite[p. 122]{LN}) evaluated at the minimal polynomial of $T$, namely, 
$$
\Phi_q(p_T)=q^n \prod_{i=1}^k\left(1-\frac{1}{q^{n_i}}\right).
$$
Note that $f_1(T)v\neq f_2(T)v$  for distinct polynomials $f_1,f_2$ of degree at most $n-1$,  for otherwise $p_T \mid (f_1-f_2)$, which is a contradiction. Thus there are $\Phi_q(p_T)$ distinct cyclic vectors for $T$. Since each $1$-dimensional $T$-splitting subspace of $\Fqn$ is spanned by precisely $q-1$ distinct cyclic vectors, it follows that the number of $1$-dimensional $T$-splitting subspaces of $\Fqn$ is $\Phi_q\left(p_T\right)/(q-1)$, as desired. 
\end{proof}

It may be noted that if $\alpha \in \Fqmn$ is such that $\Fqmn  = \Fq(\alpha)$ and if $T$ is the $\Fq$-linear endomorphism of $\Fqmn$ given by $x\mapsto \alpha x $, then the minimal polynomial $p_T$ of $T$ is precisely the minimal polynomial of $\alpha$ over $\Fq$. Hence $p_T$ is irreducible and therefore 
the formula in Proposition \ref{EndoSSC} reduces to
$S(\alpha,1,n;q) =(q^n-1)/(q-1)$, 
exactly as observed in the beginning of Section \ref{sec2}.

\appendix

\section{Vector Recurrences and Singer Cycles}
\label{app}
 
As before, we fix  positive integers $m,n$ and a prime power $q$.
For any positive integer $d$, we denote, as usual, by $\M_d(\Fq)$ 
the set of all $d\times d$ matrices with entries in $\Fq$, and by $\GL_d(\Fq)$ 
the group of all nonsingular matrices in $\M_d(\Fq)$. 

Let $C_0, C_1, \dots, C_{n-1} \in M_m(\mathbb F_q)$. 
Given any \emph{initial state} in $(\Fq^m)^n$, i.e., an $n$-tuple $({\mathbf{s}}_0, \dots, {\mathbf{s}}_{n-1})$  of 
(row) vectors in  $\mathbb F_q^m$, 
the vector recurrence (of order $n$ over $\mathbb F_q^m$)
\begin{eqnarray}
{\mathbf{s}}_{i+n}={\mathbf{s}}_iC_0+{\mathbf{s}}_{i+1}C_1+\cdots +{\mathbf{s}}_{i+n-1}C_{n-1} \quad \mbox{for} \quad i=0,1,\dots \label{sigmalfsr} 
\end{eqnarray}
generates an infinite sequence ${\mathbf{s}}^{\infty}=({\mathbf{s}}_0, {\mathbf{s}}_1,\dots)$ of vectors in $\Fq^m$. It is easy to 
see that there are integers $r, n_0$ with $1\le r \le q^{mn}-1$ and $n_0\ge 0$ such that ${\mathbf{s}}_{j+r} = {\mathbf{s}}_j$ for all $j\ge n_0$.  
The least positive integer $r$ with this property is called the \emph{period} of ${\mathbf{s}}^{\infty}$ and the corresponding least nonnegative integer $n_0$ is called the \emph{preperiod} of ${\mathbf{s}}^{\infty}$. 
The sequence  ${\mathbf{s}}^{\infty}$ is said to be \emph{periodic} if its preperiod is $0$. 
The vector recurrence \eqref{sigmalfsr} is said to be \emph{primitive} if for any choice of nonzero initial state, the infinite 
sequence generated by it is periodic of period  $q^{mn}-1$. Vector recurrences are also known as word oriented linear feedback shift registers or $\sigma$-LFSRs, and they reduce to classical LFSRs or homogeneous linear recurrences of order $n$ (with coefficients in $\Fq$) when $m=1$. Primitive vector recurrences are of interest in cryptography since they are useful in pseudorandom number generation, or alternatively, for designing fast, secure, and efficient stream ciphers. While the study of (ordinary) LFSRs is classical (see, e.g., \cite[Chap. 8]{LN}), vector recurrences and the corresponding multiple recursive method appears to have been first studied by Niederreiter \cite{N1, N2}. This method seems to have been rediscovered by Zeng, Han and He \cite{Zeng} in the guise of $\sigma$-LFSRs. Prior to that,  
generalizations of LFSRs (that turn out to be special cases of vector recurrences of Niederreiter) were studied by Tsaban and Vishne \cite{TV} and later by Dewar and Panario \cite{DP}, and these are called transformation shift registers or TSRs. We refer to the recent paper of Hasan, Panario and Wang \cite{HPW} for more on TSRs and related developments. 

Enumerating primitive LFSRs of a given order is easy and well-known, whereas it is an open question in the 
case of $\sigma$-LFSRs. 
The following conjectural formula was proposed in \cite{GSM} as a $q$-ary version of a conjecture of 
Zeng, Han and He \cite{Zeng}. 
\medskip

{\bf Primitive Vector Recurrence Conjecture (PVRC):}
The number of primitive vector recurrences of order $n$ over $\Fq^m$ is 
\begin{equation}
\label{NoSigmaLFSR}
\frac{\phi(q^{mn}-1)}{mn} \, q^{m(m-1)(n-1)} \displaystyle \prod_{i=1}^{m-1}(q^m-q^i).
\end{equation}
  
To relate the above to matrices, 
note that the maximum possible order of an element of the finite group $\GL_d(\Fq)$ is $q^d-1$ (see, e.g., \cite[Prop. 3.1]{GSM}) and elements 
of order $q^d-1$ are called \emph{Singer cycles} in $\GL_d(\Fq)$. By an \emph{$(m,n)$-block companion Singer cycle} over $\Fq$
we shall mean a Singer cycle $T$ in $\GL_{mn}(\Fq)$ of the form
\begin{equation}
\label{typeT} 
T =
\begin {pmatrix}
\mathbf{0} & \mathbf{0} & \mathbf{0} & . & . & \mathbf{0} & \mathbf{0} & C_0\\
I_m & \mathbf{0} & \mathbf{0} & . & . & \mathbf{0} & \mathbf{0} & C_1\\
. & . & . & . & . & . & . & .\\
. & . & . & . & . & . & . & .\\
\mathbf{0} & \mathbf{0} & \mathbf{0} & . & . & I_m & \mathbf{0} & C_{n-2}\\
\mathbf{0} & \mathbf{0} & \mathbf{0} & . & . & \mathbf{0} & I_m & C_{n-1}
\end {pmatrix}, 
\end{equation}
where $C_0, C_1, \dots , C_{n-1}\in \M_m(\Fq)$ and $I_m$ denotes the $m\times m$ identity matrix over $\Fq$, while $\mathbf{0}$ indicates the zero matrix in $\M_m(\Fq)$. The relation between $(m,n)$-block companion matrices such as $T$ above and vector recurrences will be clearer if one observes that \eqref{sigmalfsr} is equivalent to the relation $S_{i+1} = S_iT$ for $i\ge 0$, where $S_i$ is the $i^{\rm th}$ state vector $({\mathbf{s}}_i, \dots, {\mathbf{s}}_{i+n-1})$ and $T$ is as in \eqref{typeT}. Primitive vector recurrences correspond precisely to $(m,n)$-block companion Singer cycles \cite[Thm. 5.2]{GSM} and thus the PVRC is equivalent to

\medskip

{\bf Block Companion Singer Cycle Conjecture (BCSCC):}
The number of $(m,n)$-block companion Singer cycles over $\Fq$ is given by \eqref{NoSigmaLFSR}.

\medskip

It turns out that the map that sends a matrix in $M_{mn}(\Fq)$ to its characteristic polynomial maps the set of $(m,n)$-block companion Singer cycles over $\Fq$ onto the set of primitive polynomials in $\Fq[X]$ of degree $mn$ (cf. \cite[Thm. 6.1]{GSM}). Recall that a polynomial in $\Fq[X]$ of degree $d$ is said to be \emph{primitive} if it is the minimal polynomial over $\Fq$ of a generator of the cyclic  group $\FF_{q^d}^*$ of nonzero elements of $\FF_{q^d}$. Evidently, the number of primitive polynomials in $\Fq[X]$ of degree $d$ is $\varphi(q^d-1)/d$. With this in view, BCSCC is implied by the following stronger conjecture.  

\medskip

{\bf Primitive Fiber Conjecture (PFC):} 
For any primitive polynomial $f$ in $\Fq[X]$ of degree $mn$, the number of $(m,n)$-block companion Singer cycles over $\Fq$ having $f$ as its characteristic polynomial is
\begin{equation}
\label{NoFiber}
 q^{m(m-1)(n-1)} \displaystyle \prod_{i=1}^{m-1}(q^m-q^i).
\end{equation}

The above conjecture, proposed first in \cite{GSM}, was further strengthened in \cite{GR} as follows.

\medskip

{\bf Irreducible Fiber Conjecture (IFC):} 
For any irreducible polynomial $f$ in $\Fq[X]$ of degree $mn$, the number of $(m,n)$-block companion Singer cycles over $\Fq$ having $f$ as its characteristic polynomial is
given by \eqref{NoFiber}.

\medskip

To relate irreducible fibers to splitting subspaces, it suffices to observe (see, e.g., Lemmas 5.1 and 5.2 of  \cite{GR}) that if $f\in \Fq[X]$ is an irreducible polynomial of degree $mn$ and if $\alpha\in \Fqmn$ is any root of $f$, then the 
the number of $(m,n)$-block companion Singer cycles over $\Fq$ having $f$ as its characteristic polynomial is precisely 
$N(\alpha, m,n;q)/(q^{mn}-1)$, where $N(\alpha, m,n;q)$ is as in Section \ref{sec2} above. With this in view, the relation between the conjectures stated above and the Splitting Subspace Conjecture (SSC) as well as the Pointed Splitting Subspace Conjecture (PSSC) 
stated earlier in this paper can be summarized as follows. 
$$
\text{PSSC } \Longleftrightarrow \text{ SSC } \Longleftrightarrow \text{ IFC } \Longrightarrow \text{ PFC } \Longrightarrow \text{ BCSCC } \Longleftrightarrow \text{ PVRC.}
$$
In view of the results proved in the previous sections, it is seen that each of these conjectures holds in the affirmative when $m\le 2$ or $n\le 1$. It may also be remarked that in order to prove the PFC, one may fix a primitive polynomial $f\in \Fq[X]$ of degree $mn$ and a matrix $T\in \M_{mn}(\Fq)$ with $f$ as its characteristic polynomial. Then matrices in $M_{mn}(\Fq)$ having $f$ as its characteristic polynomial are necessarily similar to $T$. With this in view, Lachaud \cite{L} has made a fine analysis of the similarity class of $T$ and the collection of $(m,n)$-block companion matrices in it. He shows that $T$ may be chosen to be a block diagonal matrix and the $(m,n)$-block companion matrices in the  similarity class of $T$ correspond to $V^{-1}TV$, where $V$ is a 
so called 
block Vandermonde matrix. This analysis and the results of Lachaud \cite{L} lend further insight into the above conjectures. Nonetheless, the general case remains open. 

\bibliographystyle{amsplain}

\begin{thebibliography}{abcde}


\bibitem{BB}
A. T. Benjamin and C.D. Bennett, {The probability of relatively prime polynomials}, \emph{Math. Mag.} \textbf{80} (2007), 196{--}202.

\bibitem{CSWZ}
S. Corteel, C. Savage, H. Wilf, and D. Zeilberger, {A Pentagonal Number Sieve}, 
\emph{J. Combin. Theory Ser. A} \textbf{\, 82} (1998), 186{--}192.
%
%
\bibitem{DP}
M. Dewar and D. Panario, Linear transformation shift registers, \emph{IEEE Trans. Inform.
Theory}, \textbf{49} (2003), 2047--2052.

{\bibitem{FH} N. J. Fine and I. N. Herstein, {The probability that a matrix be nilpotent}, \emph{Illinois J.
Math.} \textbf{2} (1958), 499{--}504.}


{\bibitem{GGR} M. Garc\'{i}a-Armas, S. R. Ghorpade, and S. Ram, {Relatively prime polynomials and nonsingular Hankel
matrices over finite fields}, \emph{J. Combin. Theory Ser. A} {\bfseries 118} (2011), 819--828.}


{\bibitem{GSM} S. R. Ghorpade, S. U. Hasan, and M. Kumari, {Primitive polynomials, Singer cycles, and word-oriented 
linear feedback shift registers}, \emph{Des. Codes Cryptogr.} {\bfseries 58} (2011), 123--134.} 

{\bibitem{GR}  S. R. Ghorpade and S. Ram, {Block companion Singer cycles, primitive recursive vector sequences, and coprime polynomial pairs over finite fields}, \emph{Finite Fields Appl.} \textbf{17} (2011), 461--472.}

{\bibitem {HPW} S. U. Hasan, D. Panario, and Q. Wang, {Word-oriented transformation shift registers and their linear complexity}, 
\emph{Sequences and Their Applications -- SETA 2012}, Lecture Notes in Comput. Sci., Springer, Berlin (2012), to appear. }
%
%
%


%

{\bibitem{L} G. Lachaud, {Construction of block companion matrices in a conjugacy class}, preprint, 2011. }

{\bibitem {LN} R. Lidl and H. Niederreiter, \emph{Finite Fields}, Enc. of Math. and its Appl., Vol. 20, Cambridge University Press, Cambridge, 1983.\label{bib13}}

{\bibitem {N1} H. Niederreiter, {Factorization of polynomials and some linear-algebra problems over finite fields}, 
\emph{Linear Algebra Appl.} \textbf{192} (1993),  301{--}328.\label{Niederreiter1}}

{\bibitem{N2} H. Niederreiter, {The multiple-recursive matrix method for pseudorandom number generation}, \emph{Finite Fields Appl.} \textbf{1} (1995),  3{--}30.\label{Niederreiter2}}

%

%
%
%
%
%
{\bibitem{TV}
B. Tsaban and U. Vishne, {Efficient linear feedback shift registers with maximal period}, \emph{Finite Fields Appl.} \textbf{8} (2002),  256{--}267.}

{\bibitem{Zeng} G. Zeng, W. Han and K. He,  {High efficiency feedback shift register: $\sigma$-LFSR}, 
\emph{Cryptology e-Print Archive: Report 2007/114} (available: \texttt{http://eprint.iacr.org/2007/114}).\label{bib23} }                                                                                

\end{thebibliography}


\end{document}